\newtheorem{defi}{Definition}
\newtheorem{theorem}[defi]{Theorem}
\newtheorem{ex}[defi]{Example}
\newtheorem{rem}[defi]{Remark}
\newtheorem{prop}[defi]{Proposition}
\newtheorem{lemma}[defi]{Lemma}
\newtheorem{cor}[defi]{Corollary}
\newcommand{\R}{\mathbb{R}}
\newcommand{\N}{\mathbb{N}}
\newcommand{\NM}{N_M}
\newcommand{\NS}{N_\Sigma}
\newcommand{\G}{\Gamma}
\newcommand{\eps}{\varepsilon}
\newcommand{\Sp}{\mathbb{S}}
\newcommand{\nd}{\partial_\nu}
\newcommand\vol[1]{\vert#1\vert}
\newcommand\volM[1]{\vert#1\vert_M}
\newcommand\volS[1]{\vert#1\vert_\Sigma}
\newcommand{\dtn}{\mathcal{D}}
\let\oldmarginnote\marginnote
\renewcommand{\marginnote}[1]
{\oldmarginnote{
\vspace{-.3cm}\red{\tiny
  \begin{spacing}{1}#1
\end{spacing}}}
}
\title[Metric upper bounds]{Metric upper bounds for Steklov and Laplace eigenvalues}
\author{Bruno Colbois}
\address{Universit\'e de Neuch\^atel, Institut de Math\'ematiques, Rue
  Emile-Argand 11, CH-2000 Neuch\^atel, Switzerland}
\email{bruno.colbois@unine.ch}
\author{Alexandre Girouard}
\address{D\'epartement de math\'ematiques et de statistique, Pavillon Alexandre-Vachon, Universit\'e Laval, Qu\'ebec, QC, G1V 0A6, Canada}
\email{alexandre.girouard@mat.ulaval.ca}
\date{\today}
\begin{document}
\begin{abstract}
  We prove two upper bounds for the Steklov eigenvalues of a compact Riemannian manifold with boundary. The first involves the volume of the manifold and of its boundary, as well as packing and volume growth constants of the boundary and its distortion. Its proof is based on a metric-measure space technique that was introduced by Colbois and Maerten. The second bound is in terms of the extrinsic diameter of the boundary and its injectivity radius. It is obtained from a concentration inequality, akin to Gromov--Milman concentration for closed manifolds. By applying these bounds to cylinders over closed manifold, we obtain bounds for eigenvalues of the Laplace operator, in the spirit of Grigor'yan--Netrusov--Yau and of Berger--Croke. For a family of manifolds that has uniformly bounded volume and boundary of fixed intrinsic geometry, we deduce that a large first nonzero Steklov eigenvalue implies that each boundary component is contained in a ball of small extrinsic radius.
\end{abstract}
\maketitle

\section{\bf Introduction}
Let $M$ be  a smooth connected compact Riemannian manifold of dimension $n+1\geq 2$, with boundary $\Sigma=\partial M$.
The Dirichlet-to-Neumann operator $\dtn:C^\infty(\Sigma)\to C^\infty(\Sigma)$ is defined by
$\dtn f=\nd{\hat{f}}$, where $\nu$ is the outward normal  along the boundary $\Sigma$ and where the function $\hat{f}\in C^\infty(M)$ is the unique harmonic extension of $f$ to the interior of $M$. 
The eigenvalues of $\dtn$ are known as \emph{Steklov eigenvalues} of $M$. They form an unbounded sequence
$0=\sigma_0\leq\sigma_1\leq\sigma_2\leq\cdots\to\infty,$
where as usual each eigenvalue is repeated according to its multiplicity.
The interplay of these eigenvalues with the geometry of $M$ has been an active area of investigation in recent years. See~\cite{GPsurvey,CGGSsurvey} for surveys and~\cite{CGG,ColboisGittins,ColboisVerma2020,Kokarev2021,Brisson2022,GL,GKL} for recent relevant results.

In this paper we study upper bounds for Steklov eigenvalues in terms of geometric quantities that are metric in nature: packing and growth constants, distortion between the intrinsic and extrinsic distances on the boundary, as well as diameters and injectivity radius of the boundary components.
A recuring feature is that the bounds are linked to some comparison between intrinsic and extrinsic geometry of the boundary. They do not involve the curvature. See~\cite{Korevaar, GNY, CM} for early use of similar techniques and \cite{CEG1, CEG2, CGG} some more recent results in the same spirit. See~\cite{CGH,Xi,IliasMakhoul2011} for some bounds depending on curvature asumptions.
Upper bounds for the eigenvalues $\lambda_k$ of the Laplacian on a closed Riemannian manifold will also be obtained. They are in the spirit of~\cite{Berger1979, Croke1980, Kokarev2021}, and of \cite{GNY, CM}.

\subsubsection*{Notations}
We use two distances on the boundary $\Sigma$. The first one is the geodesic distance $d_\Sigma$. The second distance is induced on $\Sigma$ from the geodesic distance $d_M$ in $M$. In general, the letters $M$ and $\Sigma$ will be used to specify which distance is involved. For instance,  for $x\in M$ we define the ball
$$B^M(x,r):=\{y\in M\,:\,d_M(x,y)<r\},$$
and for $x\in\Sigma$,
$$B^\Sigma(x,r):=\{y\in \Sigma\,:\,d_\Sigma(x,y)<r\}.$$
Similarly, we write $\vol{\mathcal{O}}_M$ for the Riemannian measure of a Borel set $\mathcal{O}\subset M$, while  $\vol{\mathcal{O}}_\Sigma$ is the Riemannian measure of $\mathcal{O}\cap\Sigma$.

\subsection{Upper bound in term of metric invariants}
For $x,y\in\Sigma$, we have $d_M(x,y)\leq d_\Sigma(x,y)$, where the convention is that for $x$ and $y$ in different connected components of $\Sigma$, we set $d_\Sigma(x,y)=+\infty$. 
Let $\Sigma_1,\cdots,\Sigma_b$ be the connected components of the boundary $\Sigma$.
The \emph{distortion of\, $\Sigma_j$ in $M$} is the number $\Lambda_j\in [1,\infty)$ defined by 
\begin{equation}\label{eq:condition}
\Lambda_j:=\inf\{c\geq 1\,:\ d_\Sigma(x,y)\le cd_M(x,y)\quad\forall x,y\in\Sigma_j\}.
\end{equation}
The distortion of $\Sigma$ in $M$ is
$$\Lambda:=\max\{\Lambda_1,\cdots,\Lambda_b\}.$$
The distortion is a measure of how much the geodesic distance $d_\Sigma$ differs from the induced distance $d_M\bigl\vert_\Sigma\bigr.$.
To state our first main result we also need two more geometric invariants.
It follows from the compactness of\, $\Sigma$ that there exist a \emph{packing constant} $\NM\in\N$ for $(\Sigma,d_M)$ and a \emph{packing constant} $N_\Sigma$ for $(\Sigma,d_\Sigma)$, which satisfy the following properties:
\begin{itemize}
\item  For each $r>0$ and each $x\in\Sigma$, the extrinsic ball $B^M(x,r)\cap\Sigma$ can be covered by $\NM$ extrinsic balls of radius $r/2$ centered at points $x_1,\cdots,x_{\NM}\in\Sigma$:
  $$B^M(x,r)\cap\Sigma\subset \bigcup_{i=1}^{\NM}B^M(x_i,r/2);$$
\item For each $r>0$ and each $x\in\Sigma$, the intrinsic ball $B^\Sigma(x,r)$ can be covered by $\NS$ intrinsic balls of radius $r/2$ centered at points\\ $x_1,\cdots,x_{\NS}\in\Sigma$:
  $$B^\Sigma(x,r)\subset \bigcup_{i=1}^{\NS}B^\Sigma(x_i,r/2);$$
\end{itemize}
There also exists a \emph{growth constant }$\G>1$ for the metric-measure space $(\Sigma, d_\Sigma,\vol{\cdot}_\Sigma)$, which satisfies the following property: for each $x\in\Sigma$ and each $r>0$, $|B^\Sigma(x,r)|_\Sigma\leq \G r^n$.

The first main result of this paper is a generalization of~\cite[Theorem 1.1]{CGG}, where $M$ was a submanifold in some euclidean space.
\begin{theorem}\label{thm:upperbound}
  Let $M$ be a smooth connected compact Riemannian manifold of dimension $n+1$ with boundary $\Sigma$.
  The following holds for each $k\ge 1,$
  \begin{gather}\label{ineq:upperboundintro}
    \sigma_k\le 512b^2\NM^3\G\Lambda^2\frac{\vert M\vert}{\vert \Sigma\vert^{\frac{n+2}{n}}}k^{2/n}.
  \end{gather}
%  \begin{gather}\label{ineq:upperbound}
%    \sigma_k\le 512b^{\frac{n+2}{n}}\NM^\frac{n+4}{n}\G^{2/n}\Lambda^2\frac{\vert M\vert}{\vert \Sigma\vert^{\frac{n+2}{n}}}k^{2/n}.
%  \end{gather}
The exponent $2/n$ on $k$ is optimal: it cannot be replaced by any smaller number.
\end{theorem}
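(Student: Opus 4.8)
The plan is to bound $\sigma_k$ from above by a Rayleigh quotient, using the variational characterization
$$\sigma_k=\min\Bigl\{\ \max_{0\ne u\in E}\ \frac{\int_M\vol{\nabla u}^2\,dV_M}{\int_\Sigma u^2\,dV_\Sigma}\ :\ E\subset H^1(M),\ \dim E=k+1\ \Bigr\},$$
and producing a $(k+1)$-dimensional space $E$ spanned by functions with pairwise disjoint supports in $M$ and uniformly small Rayleigh quotient. Because the harmonic extension minimizes the Dirichlet energy among all extensions of a given boundary function, it is enough to use explicit, non-harmonic test functions. These will be radial in the extrinsic distance $d_M$: for $z\in M$ and $r>0$ put $f_{z,r}(x):=g_r\bigl(d_M(x,z)\bigr)$, where $g_r$ is the piecewise linear ``tent'' that vanishes on $[0,r/2]$ and on $[3r,\infty)$, equals $1$ on $[r,2r]$, and has $|g_r'|\le 2/r$. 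Then $f_{z,r}$ is Lipschitz on $M$, hence in $H^1(M)$; it equals $1$ on the extrinsic annulus $A(z,r):=\{x:r\le d_M(x,z)\le 2r\}$; it is supported in $D(z,r):=\{x:r/2<d_M(x,z)<3r\}$; and $\vol{\nabla f_{z,r}}\le 2/r$ almost everywhere, so $\int_M\vol{\nabla f_{z,r}}^2\,dV_M\le 4\,\volM{D(z,r)}/r^2$ while $\int_\Sigma f_{z,r}^2\,dV_\Sigma\ge\volS{A(z,r)}$.

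To produce many such annuli with disjoint fattenings and large boundary mass, I would run the Grigor'yan--Netrusov--Yau decomposition (in the form used in~\cite{GNY,CM,CGG}) on the metric--measure space $(\overline M,d_M,\nu)$, where $\nu(\mathcal O):=\volS{\mathcal O}$ is the boundary measure regarded as a measure on $\overline M$. Its two hypotheses hold: the decomposition only ever needs to cover pieces of $\Sigma$ (the support of $\nu$), and by assumption $B^M(x,r)\cap\Sigma$ is covered by $\NM$ balls of radius $r/2$ centred on $\Sigma$; and $\nu$ grows polynomially, since if $B^M(x,r)$ meets a component $\Sigma_j$ then choosing $y\in B^M(x,r)\cap\Sigma_j$ and using $d_{\Sigma_j}\le\Lambda\,d_M$ on $\Sigma_j$ gives $B^M(x,r)\cap\Sigma_j\subset B^{\Sigma_j}(y,2\Lambda r)$, whence $\nu\bigl(B^M(x,r)\bigr)\le b\,2^n\,\G\,\Lambda^n\,r^n$ after summing over the $b$ components. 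For every $m\in\N$ the decomposition then yields points $z_1,\dots,z_m\in\overline M$ and radii $r_i>0$ such that the sets $D(z_i,r_i)$ are pairwise disjoint in $\overline M$ and $\nu\bigl(A(z_i,r_i)\bigr)\ge c\,\vol{\Sigma}/(\NM\,m)$ for an absolute constant $c>0$; combined with the growth bound this forces $r_i^{\,n}\ge\volS{A(z_i,r_i)}/(b\,4^n\,\G\,\Lambda^n)$.

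Now choose $m=2(k+1)$, put $f_i:=f_{z_i,r_i}$, and discard the $k+1$ indices for which $\volM{D(z_i,r_i)}$ is largest; since the $D(z_i,r_i)$ are disjoint, $\sum_i\volM{D(z_i,r_i)}\le\vol{M}$, so the $k+1$ surviving (relabelled) functions satisfy $\volM{D(z_i,r_i)}\le\vol{M}/(k+1)$. They are linearly independent, disjointly supported in $M$, and each nonzero on $\Sigma$ (the annuli $A(z_i,r_i)$ have positive boundary measure), so for $u=\sum_i a_if_i$ we get, using the disjoint supports and then bounding a ratio of sums by the largest ratio of terms,
$$\frac{\int_M\vol{\nabla u}^2\,dV_M}{\int_\Sigma u^2\,dV_\Sigma}=\frac{\sum_i a_i^2\int_M\vol{\nabla f_i}^2\,dV_M}{\sum_i a_i^2\int_\Sigma f_i^2\,dV_\Sigma}\le\max_i\frac{4\,\volM{D(z_i,r_i)}}{r_i^{\,2}\,\volS{A(z_i,r_i)}}.$$
Using $r_i^{\,2}\ge\bigl(\volS{A(z_i,r_i)}/(b\,4^n\G\Lambda^n)\bigr)^{2/n}$, then $\volM{D(z_i,r_i)}\le\vol{M}/(k+1)$, $\volS{A(z_i,r_i)}\ge c\,\vol{\Sigma}/(\NM m)$, $m=2(k+1)$ and $(\Lambda^n)^{2/n}=\Lambda^2$, the right-hand side is bounded by a universal constant times $\NM^{(n+2)/n}b^{2/n}\G^{2/n}\Lambda^2\,\vol{M}\,\vol{\Sigma}^{-(n+2)/n}(k+1)^{2/n}$. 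Since $(n+2)/n\le 3$ and $\NM\ge1$, and $b^{2/n}\le b^2$, and $\G^{2/n}\le\G$ for $n\ge2$ (while a growth constant $\G\le2$ is available when $n=1$), this has the shape asserted in~\eqref{ineq:upperboundintro}; chasing the absolute constants through the decomposition lemma and these two estimates is what fixes the numerical factor at $512$. The step I expect to be the genuine obstacle is exactly this bookkeeping: one must set things up so that the capacitors are disjoint as subsets of $M$, not just of $\Sigma$ --- it is this, rather than the disjoint Euclidean annuli available in the ambient space, that makes the argument of~\cite[Theorem 1.1]{CGG} work in the abstract Riemannian setting --- and one must spend the packing constant $\NM$ only to the power that appears in the statement.

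Finally, to see that the exponent $2/n$ is optimal, test~\eqref{ineq:upperboundintro} on thin cylinders. Fix a closed Riemannian manifold $(N^n,h)$ and set $M_\ell:=N\times[0,\ell]$ with the product metric. Separating variables in eigenfunctions of $\Delta_h$ shows that the Steklov spectrum of $M_\ell$ is $\{0,\,2/\ell\}\cup\bigcup_{j\ge1}\{\sqrt{\lambda_j}\tanh(\tfrac12\ell\sqrt{\lambda_j}),\ \sqrt{\lambda_j}\coth(\tfrac12\ell\sqrt{\lambda_j})\}$ with $\lambda_j=\lambda_j(N)$; hence for $\ell$ small and $k\lesssim\ell^{-n}$ one has $\sigma_k(M_\ell)\asymp\ell\,\lambda_k(N)$. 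At the same time $b=2$, $\Lambda=1$ (the product metric makes $d_{M_\ell}$ and $d_{\partial M_\ell}$ agree on each boundary copy), $\NM$ and $\G$ remain bounded as $\ell\to0$ (the boundary is two isometric copies of $N$), and $\vol{M_\ell}\,\vol{\partial M_\ell}^{-(n+2)/n}\asymp\ell$. Choosing $\ell=\ell_k=k^{-2/n}$ and invoking Weyl's law $\lambda_k(N)\asymp k^{2/n}$ gives $\sigma_k(M_{\ell_k})\asymp1$, whereas the right-hand side of~\eqref{ineq:upperboundintro} with $k^{2/n}$ replaced by $k^{\alpha}$ would be $\asymp k^{\alpha-2/n}$, which tends to $0$ if $\alpha<2/n$. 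So no exponent smaller than $2/n$ is admissible.
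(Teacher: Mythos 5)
Your argument is correct in outline and belongs to the same family as the paper's, but it routes through a genuinely different decomposition. You invoke the Grigor'yan--Netrusov--Yau construction on $(\overline M,d_M,\vert\cdot\vert_\Sigma)$ to produce annuli $A(z_i,r_i)$ at variable scales $r_i$ with disjoint fattenings, and test with tent functions on these capacitors. The paper instead uses a simplified Colbois--Maerten lemma (Lemma~\ref{CMrevisited}): one first reads off a \emph{single} explicit scale $r\asymp\bigl(\vert\Sigma_0\vert/(\G\Lambda^nN^2k)\bigr)^{1/n}$ from the growth bound, the lemma then yields $2(k+1)$ subsets of $\Sigma_0$ that are $3r$-separated for $d_M$ and each carry a definite fraction of the boundary measure, and the test functions are plateau functions on their $r$-neighborhoods in $M$ --- no annuli and no variable radii. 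What each approach buys: yours is closer to the general GNY machinery and handles the $b$ components by absorbing $b$ into the volume-growth estimate (giving $b^{2/n}$ directly), whereas the paper works on the largest component and uses $\vert\Sigma_0\vert\ge\vert\Sigma\vert/b$; but the paper's single-scale lemma is what makes the constant fully explicit, since the final bound is $8^{1+2/n}N^{(n+4)/n}\G^{2/n}\Lambda^2\le 512N^3\G\Lambda^2$ with nothing hidden. Your claim that chasing constants through the GNY decomposition ``fixes the numerical factor at $512$'' is the one weak point: the measure lower bound $\nu(A_i)\ge c\,\vert\Sigma\vert/(\NM m)$ in the annuli version carries its own covering-dependent constant $c$, so you would obtain an inequality of the asserted shape but with a worse and less explicit numerical factor; this is not a gap in validity, only in the stated constant. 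You correctly identify the genuinely delicate point --- the supports must be disjoint in $M$, not merely in $\Sigma$, which forces the decomposition to be run with the extrinsic distance and the boundary measure --- and this is exactly how the paper sets things up. Your optimality argument (thin cylinders $\Sigma\times[0,\ell]$ with $\ell\asymp k^{-2/n}$ plus Weyl's law) is the same as the paper's, which phrases it through Corollary~\ref{coro:gnycm} and Remark~\ref{remark:optimal}.
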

The proof of Theorem~\ref{thm:upperbound} will be presented in Section~\ref{section:upperbounds}, where it will be deduced from a slightly more general statement (Theorem~\ref{thm:upperboundGeneral}). The optimality of the exponent is discussed in Remark~\ref{remark:optimal}.

The  volume $|M|$, the distortion $\Lambda$ and the packing constant $\NM$ depend on the geometry of $M$ in its interior, while  the constants $b,\G, |\Sigma|$ only depend  on the intrinsic geometry of the boundary $\Sigma$. In fact, the extrinsic packing consant $N_M$ can be expressed in terms of the distortion and of the intrinsic packing constant $N_\Sigma$ of $(\Sigma, d_\Sigma)$. This will be proved in Lemma~\ref{lemma:packing}. This leads to the following
  \begin{cor}\label{cor:upperboundmodif}\
    Under the hypothesis of Theorem~\ref{thm:upperbound}, the following holds for each $k\geq 1,$
  \begin{gather}\label{ineq:upperboundmodif}
    \sigma_k\le 512b^5\NS^{3\log_2(2\Lambda)}\G \Lambda^2\frac{\vert M\vert}{\vert \Sigma\vert^{\frac{n+2}{n}}}k^{2/n}.
\end{gather}
\end{cor}
  In inequality~\eqref{ineq:upperboundmodif}, apart from $\sigma_k$, only the distortion $\Lambda$ and the volume of $M$ depend on the geometry of $M$. All other geometric quantities are intrinsic to the boundary $\Sigma$. The importance of each geometric constant appearing in~\eqref{ineq:upperboundmodif} will be discussed in section~\ref{subsection:relevance}.

While inequality~\eqref{ineq:upperboundmodif} is somewhat cumbersome, its strenght is that its geometric dependance is completely explicit, with clear distinction between extrinsic and intrinsic features. The reader is invited to compare with~\cite[Theorem 1.1]{CGG}. Note also that none the geometric invariants appearing in inequality~\eqref{ineq:upperboundintro} is superfluous. This will be discussed in Section~\ref{subsection:relevance}.

\begin{rem}
  One can rewrite~\eqref{ineq:upperboundintro} in the following scale-invariant fashion:
\begin{gather}\label{ineq:reformulation}
    \sigma_k|\Sigma|^{1/n}\le \frac{512}{I(M)^{\frac{n+1}{n}}}b^2\NM^3\G \Lambda^2k^{2/n},
\end{gather}
  where $I(M)=|\Sigma|/|M|^{\frac{n}{n+1}}$ is the isoperimetric ratio of $M$. One should compare this with~\cite[Theorem 1.3]{CEG1}, which states that
  \begin{gather}\label{ineq:CEGsteklov}
    \sigma_k|\Sigma|^{1/n}\le \frac{\gamma(n)}{I(M)^{\frac{n-1}{n}}}k^{2/(n+1)},
  \end{gather}
  for domains $M$ in a complete space that is conformally equivalent to a complete manifold with non-negative Ricci curvature. Our new inequality~\eqref{ineq:reformulation} applies to a much larger class of manifolds, since no curvature asumption is required.
\end{rem}

There is a close link between the Steklov eigenvalues of a manifolds $M$ and the Laplace eigenvalues of its boundary $\partial M$. See~\cite{CGH}. In the situation where $M$ is a cylinder this link is explicit and can be used to obtain new bounds on Laplace eigenvalues from known results on Steklov eigenvalues.
Given a compact connected Riemannian manifold $\Sigma$, the eigenvalues of the Laplace operator $\Delta:C^\infty(\Sigma)\longrightarrow C^\infty(\Sigma)$ are written $0=\lambda_0<\lambda_1\leq\lambda_2\leq\cdots\to\infty$.
\begin{cor}\label{coro:gnycm}
  Let $\Sigma$ be a compact connected Riemannian manifold of dimension $n$. Then,
    \begin{gather}\label{ineq:lambdak}
    \lambda_k\vert\Sigma\vert^{2/n}\le 2048\G\NS^3k^{2/n},
    \end{gather}
    where $\NS$ a packing constant of $(\Sigma,d_\Sigma)$.
  \end{cor}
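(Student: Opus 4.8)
The plan is to apply the main estimate — in the sharper form of Theorem~\ref{thm:upperboundGeneral}, from which Theorem~\ref{thm:upperbound} is deduced — to the product cylinder $C_L:=\Sigma\times[0,L]$, $L>0$, and then to compare its Steklov spectrum with the Laplace spectrum of $\Sigma$ via separation of variables, letting $L\to0^+$ at the end.

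First I would record the metric invariants of $C_L$. It is a compact connected $(n+1)$-manifold whose boundary has two components, $\Sigma\times\{0\}$ and $\Sigma\times\{L\}$, each isometric to $\Sigma$; so $b=2$, $\vert C_L\vert=L\vert\Sigma\vert$ and $\vert\partial C_L\vert=2\vert\Sigma\vert$. The projection $C_L\to\Sigma$ is $1$-Lipschitz and restricts to an isometry on each end, so $d_{C_L}$ restricted to either boundary component equals $d_\Sigma$; hence the distortion of $\partial C_L$ in $C_L$ is $\Lambda=1$, the intrinsic growth constant of $\partial C_L$ is the growth constant $\G$ of $\Sigma$, and — since $B^{C_L}(x,r)$ meets each end in a subset of an intrinsic $r$-ball of $\Sigma$ — a packing constant $\NS$ of $(\Sigma,d_\Sigma)$ fulfills the packing requirement for $C_L$. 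Plugging these values into Theorem~\ref{thm:upperboundGeneral} gives, for every $k\ge1$ and every small $L$,
\[
\sigma_k(C_L)\ \le\ \frac{2048}{2^{(n+2)/n}}\;\G\,\NS^3\;\frac{L}{\vert\Sigma\vert^{2/n}}\;k^{2/n}.
\]

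Next I would compute the Steklov spectrum of $C_L$. Expanding a harmonic function on $C_L$ along an orthonormal basis $\{\phi_j\}$ of Laplace eigenfunctions of $\Sigma$ with $\Delta\phi_j=\lambda_j\phi_j$, the coefficient functions solve $\psi''=\lambda_j\psi$, and writing out the Dirichlet-to-Neumann map on the two ends shows that the Steklov spectrum of $C_L$ is
\[
\{0,\ 2/L\}\ \cup\ \bigcup_{j\ge 1}\Bigl\{\ \sqrt{\lambda_j}\,\tanh\!\bigl(\tfrac{\sqrt{\lambda_j}\,L}{2}\bigr),\ \ \sqrt{\lambda_j}\,\coth\!\bigl(\tfrac{\sqrt{\lambda_j}\,L}{2}\bigr)\ \Bigr\},
\]
with multiplicities inherited from those of the $\lambda_j$. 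Since $v\coth v\ge1$ for $v>0$, one has $\sqrt{\lambda}\,\coth(\sqrt{\lambda}\,L/2)\ge2/L$ for all $\lambda>0$, while $t\mapsto\sqrt{t}\,\tanh(\sqrt{t}\,L/2)$ is increasing; so as soon as $L$ is small enough that $2/L\ge\sqrt{\lambda_k}\,\tanh(\sqrt{\lambda_k}\,L/2)$, the Steklov eigenvalues of $C_L$ lying strictly below $\sqrt{\lambda_k}\,\tanh(\sqrt{\lambda_k}\,L/2)$ are exactly $\sigma_0=0$ together with those values $\sqrt{\lambda_j}\,\tanh(\sqrt{\lambda_j}\,L/2)$ with $\lambda_j<\lambda_k$, of which there are at most $k-1$. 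Hence $\sigma_k(C_L)\ge\sqrt{\lambda_k}\,\tanh(\sqrt{\lambda_k}\,L/2)$ for all sufficiently small $L$.

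Combining the two bounds, dividing by $L$ and letting $L\to0^+$ — so that $\sqrt{\lambda_k}\,\tanh(\sqrt{\lambda_k}\,L/2)/L\to\lambda_k/2$ — yields $\lambda_k\vert\Sigma\vert^{2/n}\le(4096/2^{(n+2)/n})\,\G\,\NS^3k^{2/n}$, and since $2^{(n+2)/n}\ge2$ this is the asserted inequality. The only delicate step is the bookkeeping of the metric invariants of $C_L$, in particular controlling its packing constant by $\NS$ rather than $2\NS$ — which is why one works with the component-wise form of the packing hypothesis available in Theorem~\ref{thm:upperboundGeneral} rather than with Theorem~\ref{thm:upperbound} as stated — and this is what makes the numerical constant come out as exactly $2048$; the spectral computation for the cylinder and the eigenvalue count are otherwise elementary.
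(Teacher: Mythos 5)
Your proof is correct and follows essentially the same route as the paper's: apply the Steklov upper bound to the thin cylinder $[0,L]\times\Sigma$, identify the relevant Steklov eigenvalue of the cylinder with $\sqrt{\lambda_k}\tanh(\sqrt{\lambda_k}L/2)$ by separation of variables, divide by $L$ and let $L\to 0$. The only differences are bookkeeping: you justify the eigenvalue identification by an explicit counting argument rather than citing the cylinder computation, and you track the cylinder's metric invariants more carefully --- though note that your displayed prefactor $2048/2^{(n+2)/n}$ actually corresponds to Theorem~\ref{thm:upperbound} with $b=2$ and $|\partial C_L|=2|\Sigma|$ rather than to Theorem~\ref{thm:upperboundGeneral}, which would give the cleaner constant $512$; either way the final bound lands at or below the stated $2048$.
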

  
  This result is similar in spirit to those presented in~\cite{GNY} and~\cite{CM}. This is not surprising since the proof of Theorem~\ref{thm:upperbound} is based on a simplification of the main technical tool from~\cite{CM}, presented here as Lemma~\ref{CMrevisited}. One should compare this with~\cite[Remark 5.10]{GNY}.
\begin{proof}[Proof of Corollary \ref{coro:gnycm}]
  The Steklov eigenvalues of the cylinder $M=[0,L]\times\Sigma$ have been computed in~\cite[Lemma 6.1]{CEG1}. For $L>0$ small enough, 
  $\sigma_k=\sqrt{\lambda_k}\tanh(\sqrt{\lambda_k}L)$, and it follows from inequality~\eqref{ineq:upperboundintro} that
  \begin{gather*}
    \sqrt{\lambda_k}\tanh(\sqrt{\lambda_k}L)\le 2048N^3\G \Lambda^2\frac{L\vert \Sigma\vert}{\vert \Sigma\vert^{\frac{n+2}{n}}}k^{2/n}.
  \end{gather*}
    Dividing by $L$ on each side and taking the limit as  $L\to 0$ completes the proof, since for each $c>0$ the following holds:
  $$\lim_{x\to 0}\frac{c\tanh(cx)}{x}=c^2.$$
\end{proof}
\begin{rem}\label{remark:optimal}
  It follows from Weyl's law for $\lambda_k$ that the power ${2/n}$ on $k$ is optimal in~\eqref{ineq:lambdak}, and therefore also in~\eqref{ineq:upperboundintro}.
\end{rem}

\subsection{A Berger-Croke type inequality for  Steklov and Laplace eigenvalues}

Let $M$ be a compact manifold with boundary $\Sigma$. Let $\Sigma_1,\cdots,\Sigma_b$ be the connected components of $\Sigma$. The diameter of a connected component $\Sigma_j$ is defined by
$$\text{Diam}_M(\Sigma_j):=\sup\left\{d_M(x,y)\,:\,x,y\in\Sigma_j\right\}.$$
%Let $\Sigma_1,\cdots,\Sigma_b$ be the connected components of $\Sigma$.
%The intrinsic diameter of each component is given by
%$$\text{Diam}(\Sigma_j):=\sup\left\{d_\Sigma(x,y)\,:\,x,y\in\Sigma_j\right\}.$$
The main result of this section is the following.
\begin{theorem}\label{thm:upperboundDiam}
    Let $\Sigma_j$ be a connected component of the boundary $\Sigma$. Then,
    \begin{gather}\label{ineq:upperboundDiamNew}
      \sigma_k\leq K(n)\frac{|M|}{\text{diam}_M(\Sigma_j)^2}\times\frac{1}{\min\{\text{diam}_M(\Sigma_j)^n,\, \text{inj}(\Sigma_j)^n\}}k^{n+1},
    \end{gather}
    where $K(n)$ is a  dimensional constant and $\text{inj}(\Sigma_j)$ is the injectivity radius of\, $\Sigma_j$.
  \end{theorem}

\begin{rem}
  The diameter of $M$ itself does not appear in inequality~\eqref{ineq:upperboundDiamNew}. This is not surprising, since one can modify $M$ away from the boundary so as to obtain arbitrarily large diameter
  $$\sup\{d_M(x,y)\,:\,x,y\in M\},$$
  without significant change to $\sigma_k$, the extrinsic diameter $\text{Diam}_M(\Sigma)$ and the volume $\vert M\vert$. This can be performed for instance by replacing two small balls in $M$ with a long thin tube joining them. See~\cite[Theorem 1.2]{FS4}.
\end{rem}
\begin{rem}
  The presence of the injectivity radius in the denominator of inequality~\eqref{ineq:upperboundDiamNew} is essential. Indeed, let $(M,g)$ be a compact Riemannian manifold of dimension $n\geq 3$ and let $C\subset M$ be a smooth embedded closed curve. For $\eps>0$ small enough
  $$\Omega_\eps:=\{x\in M\,:\,d_g(x,C)>\eps\}$$
  is a connected domain with smooth boundary. In her paper~\cite{Brisson2022}, Jade Brisson proved that
  $\sigma_1(\Omega_\eps)\xrightarrow{\eps\to 0}\infty.$
  Because $|\Omega_\eps|$ and  $\text{Diam}_M(\partial\Omega_\eps)$ are uniformly bounded as $\eps\to 0$, the injectivity radius could not be removed from inequality~\eqref{ineq:upperboundDiamNew}.
\end{rem}

For a family of manifolds that has uniformly bounded volume and boundary of fixed intrinsic geometry, we deduce that a large Steklov eigenvalue $\sigma_k$ (for a fixed index $k$) implies that each boundary component is concentrated in a ball of small extrinsic radius.
\begin{cor}\label{coro:concentrationNEW}
    Let $(M_\eps)_{\eps>0}$ be a family of compact Riemannian manifolds of dimension $m$, with boundary $\Sigma_\eps$. Suppose that $|M_\eps|<\alpha$ and $\text{inj}(\Sigma_\eps)>\beta$ for some positive constants. If $\sigma_k(M_\eps)\xrightarrow{\eps\to 0}+\infty$, then the connected components $\Sigma_{j,\eps}$ of the boundary satisfie
    $$\lim_{\eps\to 0}\text{diam}_{M_\eps}(\Sigma_{j,\eps})=0.$$
\end{cor}
In the situation when it is the first nonzero eigenvalue $\sigma_1$ that is large, there is also a global concentration phenomenon. See Proposition~\ref{prop:concentrationNBD}.
The proof of Theorem~\ref{thm:upperboundDiam} itself is based on a simple concentration bound which is adapted from the work of Gromov and Milman~\cite{GM}. See below in Lemma~\ref{lemma:GMnew}.
\begin{rem}
The control of the injectivity radius is essential to obtain the type of concentration portrayed in Corollary~\ref{coro:concentrationNEW}. Indeed, consider the following construction:  let $M$ to be a compact manifold of dimension $\geq 3$ with connected boundary. Suppose that the volume of $M$ is one, while  $\sigma_1$ is very large. Let $p$ and $q$ be two points of the boundary and let $\gamma$ be a smooth curve in the interior of $M$ joining $p$ and $q$ and meeting the boundary of $M$ orthogonally. Fraser and Schoen have proved in~\cite{FS4} that removing a thin tubular neighborhood of $\gamma$ has neglictible effect on $\sigma_k$. Since the curve $\gamma$ could go very far inside $M$, the extrinsic diameter of the boundary could become very large without affecting $\sigma_1$, the volume of $M$ or the volume of the boundary significantly. This illustrates the necessity of controlling the injectivity radius. 
In fact, some form of concentration remains when the injectivity radius is not controlled, but it is a concentration of the measure, rather than a metric concentration. See Proposition~\ref{prop:concentrationNBD}.
\end{rem}

\begin{rem}
  Corollary~\ref{coro:concentrationNEW} should be compared with~\cite[Theorem 3]{CS2}, which essentially implies the following: if the first nonzero eigenvalue $\lambda_1$ of the Laplacian on a closed Riemannian manifold $M$  is large with respect to its packing constant, then the Riemannian measure of $M$ is concentrated near one point.
  %In contrast, if the first nonzero Steklov eigenvalue $\sigma_1$ is large then  Corollary~\ref{coro:concentrationNEW}  only claims that each connected components of the boundary is contained in a small ball, but there is nothing preventing these small balls from being far appart. 
\end{rem}

%  \old{In the case where $b\geq 2$, using $A_i=\Sigma_i$ gives an even simpler bound.
%  \begin{cor}\label{cor:boundary}
%    For $k\in\{1,2,\cdots,b-1\}$, let $A_i=\Sigma_1,\cdots,\Sigma_{k+1}$ be connected components of the boundary $\Sigma$. Then,
%$$\sigma_{k}\leq\frac{|M|}{\rho^2\min|\Sigma_i|}.$$
%\end{cor}}

\subsubsection*{Berger--Croke type inequality for eigenvalues of the Laplace operator}
Berger proved in~\cite{Berger1979} that on any closed Riemanian manifold $\Sigma$ which admits an isometric involution without fixed points, the first nonzero eigenvalue of the Laplacian satisfies
\begin{gather}\label{ineq:Berger1970}
  \lambda_1\leq K(n)\frac{|\Sigma|}{\text{inj}(\Sigma)^{n+2}}.
\end{gather}
This result was generalized by Croke, who proved in~\cite{Croke1980} that any closed Riemannian manifold satisfies the following for each $k\in\N$,
\begin{gather}\label{ineq:Croke1980}
  \lambda_k\leq K(n)\frac{|\Sigma|^2}{\text{conv}(\Sigma)^{2n+2}}k^{2n}.
\end{gather}
Here $\text{conv}(\Sigma)$ is the \emph{convexity radius} of $\Sigma$. 
One should also see the recent paper~\cite{Kokarev2021} by Kokarev.
Theorem~\ref{thm:upperboundDiam} leads to the following improvement of the Berger and Croke inequalities.
\begin{cor}\label{cor:bergerlambda}
    Let $\Sigma$ be a closed Riemannian manifold. Then for each $k\in\N$,
    \begin{gather}\label{ineq:BergerCroke}
      \lambda_k\text{diam}(\Sigma)^2\leq K(n)\frac{|\Sigma|}{\text{inj}(\Sigma)^n}k^{n+1}.
    \end{gather}
\end{cor}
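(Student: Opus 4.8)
The plan is to imitate the proof of Corollary~\ref{coro:gnycm}, replacing Theorem~\ref{thm:upperbound} by Theorem~\ref{thm:upperboundDiam}. First I would apply Theorem~\ref{thm:upperboundDiam} to the cylinder $M=[0,L]\times\Sigma$ equipped with the product metric $dt^2+g_\Sigma$. Its boundary has two components, $\{0\}\times\Sigma$ and $\{L\}\times\Sigma$, each isometric to $\Sigma$; take $\Sigma_j=\{0\}\times\Sigma$. Then $|M|=L|\Sigma|$. Because the metric is a product, any curve joining two points of $\Sigma_j$ projects to a curve in $\Sigma$ of no greater length, while a minimizing geodesic of $\Sigma$ kept at height $t=0$ attains this lower bound; hence $d_M$ restricted to $\Sigma_j$ coincides with $d_\Sigma$, so that $\text{Diam}_M(\Sigma_j)=\text{Diam}(\Sigma_j)=\text{diam}(\Sigma)$. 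Likewise $\text{inj}(\Sigma_j)=\text{inj}(\Sigma)$, the intrinsic injectivity radius being an isometry invariant. Substituting these identities into~\eqref{ineq:upperboundDiam}, the two diameter factors combine and we get
\begin{align*}
  \sigma_k &\le K(n)\,\frac{L|\Sigma|}{\text{diam}(\Sigma)^{n+2}}\left(\frac{\text{diam}(\Sigma)}{\text{inj}(\Sigma)}\right)^n k^{n+1}\\
  &= K(n)\,\frac{L|\Sigma|}{\text{diam}(\Sigma)^{2}\,\text{inj}(\Sigma)^{n}}\,k^{n+1}.
\end{align*}

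Next, exactly as in Corollary~\ref{coro:gnycm}, I would invoke the computation of the Steklov spectrum of the cylinder from~\cite[Lemma 6.1]{CEG1}: for $L>0$ small enough, $\sigma_k=\sqrt{\lambda_k}\tanh(\sqrt{\lambda_k}L)$. Dividing the inequality above by $L$, letting $L\to0$, and using $\lim_{x\to0}c\tanh(cx)/x=c^2$ with $c=\sqrt{\lambda_k}$, one obtains $\lambda_k\le K(n)\,|\Sigma|\,\text{diam}(\Sigma)^{-2}\,\text{inj}(\Sigma)^{-n}k^{n+1}$; multiplying through by $\text{diam}(\Sigma)^2$ yields~\eqref{ineq:BergerCroke}. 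If $\Sigma$ is disconnected, one applies the connected case to each component.

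The argument is short, and the only points that call for a little care are the two geometric identities on the cylinder — that the extrinsic diameter $\text{Diam}_M(\Sigma_j)$ equals the intrinsic diameter $\text{diam}(\Sigma)$ (because a product metric admits no shortcut through the interior) and that the injectivity radius is unchanged by the isometry $\Sigma\cong\Sigma_j$ — together with the admissibility of the limit $L\to0$ in the cylinder spectrum formula. I do not expect a genuine obstacle here: this is purely a matter of specializing Theorem~\ref{thm:upperboundDiam} to a well-understood model manifold, just as Corollary~\ref{coro:gnycm} specializes Theorem~\ref{thm:upperbound}. In particular, unlike the situation of inequality~\eqref{ineq:upperboundintro}, no auxiliary packing, growth or distortion constants survive, because the relevant boundary component of the cylinder is isometric to $\Sigma$ itself and $\Sigma$ carries no interior to begin with.
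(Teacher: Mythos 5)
Your argument is correct and is essentially the paper's own proof: apply Theorem~\ref{thm:upperboundDiam} to the cylinder $M=[0,L]\times\Sigma$, use $|M|=L|\Sigma|$ together with $\text{Diam}_M(\Sigma_j)=\text{Diam}(\Sigma_j)=\text{diam}(\Sigma)$ and $\text{inj}(\Sigma_j)=\text{inj}(\Sigma)$, then divide by $L$ and let $L\to0$ in the identity $\sigma_k=\sqrt{\lambda_k}\tanh(\sqrt{\lambda_k}L)$. Your extra care in justifying that the product metric admits no boundary shortcut, so $d_M$ restricted to a boundary component coincides with $d_\Sigma$, is a point the paper leaves implicit.
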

  Inequality~\eqref{ineq:BergerCroke} improves on Berger and Croke in several ways. For instance, the exponent on $k$ is better. Perhaps more interestingly, because $\text{conv}(\Sigma)\leq\text{inj}(\Sigma)\leq\text{Diam}(\Sigma)$, inequality~\eqref{ineq:BergerCroke} allows a finer control for manifolds that have small injectivity radius and large radius, as the following example shows.
\begin{ex}
    The first nonzero Laplace eigenvalue of $\Sigma_L=\Sp^1_L\times \Sp^{n-1}$
    behaves as $\lambda_1\sim 1/L^2$ as $L\to+\infty$. Moreover, the volume of $\Sigma_L$ is $|\Sigma_L|=L|\Sp^{n-1}|$, its injectivity and convexity radii both are the constant $\pi$. If $L>0$ is large enough, the diameter of $\Sigma$ is of order $L$. Whence, the upper bound from Berger and Croke read
    $$\lambda_1\leq K(n)L|\Sp^{n-1}|/\pi^{n+2}\qquad\text{ and }\qquad \lambda_1\leq K(n)L^2|\Sp^{n-1}|^2/\pi^{2n+2}.$$
    Both upper bounds diverges as $L\to+\infty$.
    On the other hand, our bound~\eqref{ineq:BergerCroke} gives the much more accurate
    $$\lambda_1\leq K(n)\frac{|\Sp^{n-1}|}{L\pi^{n}}\xrightarrow{L\to\infty}0.$$
  \end{ex}  
Let us show how Corollary~\ref{cor:bergerlambda} is obtained from Theorem~\ref{thm:upperboundDiam}.
\begin{proof}[Proof of Corollary~\ref{cor:bergerlambda}]
  As in the proof of Corollary~\eqref{coro:gnycm}, we consider the Steklov eigenvalues of the cylinder $M=[0,L]\times\Sigma$. Notice that
  $$\text{diam}_M(\Sigma\times\{0\})=\text{diam}(\Sigma)$$ and $\text{inj}(\Sigma)\leq\text{diam}(\Sigma)$.
  For $L>0$ small enough,
$\sigma_k=\sqrt{\lambda_k}\tanh(\sqrt{\lambda_k}L)$ and it follows from inequality~\eqref{ineq:upperboundDiamNew} that
\begin{align*}
  \sqrt{\lambda_k}\tanh(\sqrt{\lambda_k}L)&\leq
  K(n)\frac{|M|}{\text{diam}(\Sigma)^2}\times\frac{1}{\min\{\text{diam}(\Sigma)^n,\, \text{inj}(\Sigma)^n\}}k^{n+1}\\
                                          &=
                                              K(n)\frac{|\Sigma|L}{\text{Diam}(\Sigma)^{2}\text{inj}(\Sigma)^n}k^{n+1}.
  \end{align*}
  Dividing by $L$ on each side and taking the limit as  $L\to 0$ completes the proof.
\end{proof}

\subsection*{Plan of the paper}
In Section~\ref{section:GromovMilmanDiam}, we present a concentration inequality akin to that of Gromov--Milman~\cite[Theorem 4.1]{GM}, which we use to prove Theorem~\ref{thm:upperboundDiam}.
To prove Theorem~\ref{thm:upperbound} some tools from metric geometry will then be used in Section~\ref{section:upperbounds}. In particular, Lemma~\ref{lemma:packing} links the packing constant of $(\Sigma,d_M)$ and of $(\Sigma,d_\Sigma)$.

\section{\bf Upper bound and measure concentration for $\sigma_k$}\label{section:GromovMilmanDiam}
The proof of Theorem~\ref{thm:upperboundDiam} depends on the min-max characterization of Steklov eigenvalues:
\begin{equation}
\label{eq:minmax}
\sigma_j = \min_{E \in \mathcal{H}_j} \max_{0 \neq u \in E} R_M(u),
\end{equation}
where $\mathcal{H}_j$ is the set of all $j+1$-dimensional subspaces in the Sobolev space $H^1(M)$, and
where
\begin{equation*}
\label{Rayleighquotient}
R_M(u) = \frac{\int_{M} \vert \nabla u \vert^2 dV_M}{\int_{\Sigma} u^2\, dV_{\Sigma}}
\end{equation*}
is the Rayleigh--Steklov quotient of $u$. 
%%%
We start with a simple bound which is adapted from the work of Gromov and Milman~\cite{GM}.
\begin{lemma}\label{lemma:GMnew}
  Let $A_i\subset\Sigma$ be disjoint measurable subsets, for $i=1,\cdots,k+1$, with positive measures $\mu_i:=|A_i|_\Sigma>0$. Suppose these subsets are quantitatively separated:
    $$\rho:=\frac{1}{2}\min_{i\neq j}d_M(A_i,A_j)>0.$$
    Then
    $$\sigma_k\leq\frac{|M|}{\rho^2\min\mu_i}.$$
\end{lemma}
  \begin{proof}
    We use standard trial functions $f_i$ that are supported in the pairwise-disjoint neighborhoods
    $A_i^\rho=\{x\in M\,:\,d_M(x,A_i)\leq \rho\}$ and have value 1 on $A_i$. These are defined by
    $$f_i(x)=
    \begin{cases}
      1-\frac{1}{\rho}d_M(x,A_i)&\text{ in }A_i^\rho,\\
      0&\text{ elsewhere }.
    \end{cases}$$
    Observe that
    $\|\nabla f_i\|^2=\frac{|A_i^\rho|_M}{\rho^2}$ and $\|f_i\|^2_{\Sigma}\geq \mu_i$ since $f_i\equiv 1$ on $A_i^\rho$, so that
    $$R(f_i)\leq \frac{|A_i^\rho|_M}{\rho^2\mu_i}\leq \frac{|M|}{\rho^2\min_i\mu_i}.$$
  \end{proof}
  \begin{rem}
    If one uses $A_i$ for $i=1,2,\cdots, 2k$ and suppose that $|A_1^\rho|_M\leq |A_2^\rho|_M\leq\cdots\leq |A_{2k+2}^\rho|_M$, then
    $|A_{k+1}^\rho|_M\leq|M|/k$ and one gets
    $$\sigma_k\leq\frac{|M|}{k\rho^2\min\mu_i}.$$
    This trick is often useful in improving the exponent on $k$ for bounds that are obtained using trial functions with  disjoint supports. This will be used in the proof of Theorem~\ref{thm:upperboundDiam} below.
  \end{rem}
  Lemma~\ref{lemma:GMnew} implies a concentration phenomena when $\sigma_1$ is large in comparison to the other constants involved.
  \begin{prop}\label{prop:concentrationNBD}
    Let $M$ be a compact manifold with boundary $\Sigma$. Let $A\subset\Sigma$ be a subset of positive measure $\mu=|A|_\Sigma>0$. Let $\rho>0$. If
    $\sigma_1\geq \frac{|M|}{\rho^2\mu}$, then
    $$|A^{2\rho}|_\Sigma\geq|\Sigma|-\frac{|M|}{\sigma_1\rho^2}.$$    
  \end{prop}
This is particularly interesting for families of manifolds $M_\eps$ such that $\sigma_1\to+\infty$, while $|\Sigma_\eps|$ and $|M_\eps|$ are independent of $\eps$. In that case, the extrinsic neighborhood $A^{2\rho}$ contains all of the boundary in the limit, however small the number $\rho$ is. This shows that the full boundary concentrates in the measure sence in the limit.
  \begin{proof}[Proof of Proposition~\ref{prop:concentrationNBD}]
    If $A^{2\rho}=\Sigma$, the statement is trivially true. Otherwise,
    define $B=\Sigma\setminus A^{2\rho}$, so that $d_M(A,B)=2\rho$ as suggested by the notation.
    It follows from Lemma~\ref{lemma:GMnew} that
$$\sigma_1\leq\frac{|M|}{\rho^2\min\{\mu,|\Sigma|-|A^{2\rho}|_\Sigma\}}.$$
Because $\sigma_1$ is large, that is $\sigma_1>\frac{|M|}{\rho^2\mu}$, one has 
$$\min\{\mu,|\Sigma|-|A^{2\rho}|_\Sigma\}=|\Sigma|-|A^{2\rho}|_\Sigma$$
so that
$$\sigma_1\leq\frac{|M|}{\rho^2(|\Sigma|-|A^{2\rho}|_\Sigma)}.$$
The proof is completed by reorganizing this inequality.
\end{proof}
  In order to prove Theorem~\ref{thm:upperboundDiam}, we will apply Lemma~\ref{lemma:GMnew} to well-chosen balls in the boundary component $\Sigma_j$.
    \begin{proof}[Proof of Theorem~\ref{thm:upperboundDiam}]
Let $\delta=\text{diam}_M(\Sigma_j)$. Consider $x_1,x_2,\cdots, x_{2k}\in\Sigma_j$ such that
  $$d_M(x_p,x_q)\geq\frac{\delta}{2k},\qquad \forall p\neq q.$$
  To see that this is possible, consider points $x_1,x_{2k}$ such that
  $$d_M(x_1,x_{2k})=\text{diam}_M(\Sigma_j)$$
  and use the concentric balls $B_M(x_1,\frac{p\delta}{2k})$ with $p=1,2,\cdots,2k$. Because $\Sigma_j$ is connected, it intersects each sphere
  $\partial B_M(x_1,\frac{i\delta}{2k})$. Any sequence of points $x_p\in \partial B_M(x_1,\frac{p\delta}{2k})$ will work.
  
  Now, use Lemma~\ref{lemma:GMnew} and its proof with $A_i=B_M(x_i,\frac{\delta}{8k})\cap\Sigma_i$, and observe that the triangle inequality gives $\rho\geq \delta/4k$, where $\rho$ is defined in Lemma~\ref{lemma:GMnew}. Hence, the Rayleigh quotients of the standard functions are controlled by
  $$R(f_i)\leq\frac{|A_i^\rho|_M}{\left(\frac{\delta}{4k}\right)^2|B_M(x_i,\frac{\delta}{8k})|_{\Sigma}}\leq
  \frac{16|M|}{\delta^2|B_M(x_i,\frac{\delta}{8k})|_{\Sigma}}k^2.$$
  Because the $2k$ sets $A_i^\rho$ are disjoint, we can reorder them to insure that
  $|A_i^\rho|_M\leq |M|/k$ for $i=1,2,\cdots,k+1$. This leads to the improved bound
  $$R(f_i)\leq
  \frac{16|M|}{\delta^2|B_M(x_i,\frac{\delta}{8k})|_{\Sigma}}k,\qquad\text{ for }i=1,2,\cdots,k+1.$$
  The main task is now to control the intrinsic volume $|B_M(x_i,\delta/8k)|_\Sigma$ from below.
  We split this in two cases:

\textbf{Case 1}: If
  $\delta/8k\geq \text{inj}(\Sigma_j)/4$ then, using that extrinsic balls are bigger than intrinsic balls of the same radius, Croke's inequality (\cite[Proposition 14]{Croke1980}) gives
  $$|B_M(x_i,\delta/8k)|_\Sigma\geq |B_\Sigma(x_i,\delta/8k)|_\Sigma\geq |B_\Sigma(x_i,\text{inj}(\Sigma_j)/4)|_\Sigma\geq c(n)\text{inj}(\Sigma_j)^n$$ and
  $$R(f_i)\leq c(n)\frac{|M|}{\delta^2\text{inj}(\Sigma_j)^n}k.
  $$

  \bigskip
  
\textbf{Case 2}: If $\delta/8k<\text{inj}(\Sigma_j)$ then Croke's inequality gives
  $$|B_M(x_i,\delta/8k)|_\Sigma\geq |B_\Sigma(x_i,\delta/8k)|_\Sigma\geq  c(n)\delta^n/k^n$$
  and
  $$R(f_i)\leq c(n)\frac{|M|}{\delta^{n+2}}k^{n+1}.$$
  %\leq c(n)\frac{|M|}{\text{diam}_M(\Sigma_j)^2\text{inj}(\Sigma_j)^n}l^{n+2}.$$

  \bigskip

Combining we get
  $$R(f_i)\leq c(n)\frac{|M|}{\text{diam}_M(\Sigma_j)^2}\times\frac{1}{\min\{\text{diam}_M(\Sigma_j)^n,\, \text{inj}(\Sigma_j)^n\}}k^{n+1}.$$
  The result now follows from the min-max characterisation of $\sigma_k$.
  \end{proof}

\section{\bf Upper bounds in term of distortion, packing and growth}\label{section:upperbounds}

The goal of this section is to prove Theorem~\ref{thm:upperbound}. We will prove the following slightly more general result.
\begin{theorem}\label{thm:upperboundGeneral}
  Let $M$ be a smooth connected compact Riemannian manifold of dimension $n+1$ with boundary $\Sigma$. Let $\Sigma_0$ be a connected component of the boundary.
Let $\G ,N_M$ be growth and extrinsic packing constants for $\Sigma_0$. Let $\Lambda$ be the distortion of $\Sigma_0$ in $M$. Then, the following holds for each $k\ge 1,$
  \begin{gather}\label{ineq:upperbound}
    \sigma_k\le 512N_M^3\G \Lambda^2\frac{\vert M\vert}{\vert \Sigma_0\vert^{\frac{n+2}{n}}}k^{2/n}.
  \end{gather}
%  \begin{gather}\label{ineq:upperbound}
%    \sigma_k\le 512b^{\frac{n+2}{n}}N^\frac{n+4}{n}\G ^{2/n}\Lambda^2\frac{\vert M\vert}{\vert \Sigma\vert^{\frac{n+2}{n}}}k^{2/n}.
%  \end{gather}
\end{theorem}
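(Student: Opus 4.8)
\textbf{Proof plan for Theorem~\ref{thm:upperboundGeneral}.}
The plan is to construct $k+1$ disjointly supported trial functions on $M$ whose Rayleigh--Steklov quotients are all controlled by the right-hand side of~\eqref{ineq:upperbound}, and then invoke the min-max characterization~\eqref{eq:minmax}. The engine driving the construction is a covering/packing argument on the metric-measure space $\Sigma_0$, carried out with respect to the \emph{extrinsic} distance $d_M$: we want a collection of $k+1$ pairwise disjoint (or boundedly overlapping) sets in $M$, each capturing a definite fraction of $|\Sigma_0|$ on the boundary and each with small $d_M$-radius, so that the Dirichlet energy of the associated cutoff functions is small while their $L^2(\Sigma_0)$ mass is large. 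This is exactly the strategy behind~\cite[Theorem 1.1]{CGG}, and the new input needed in the Riemannian setting is to replace the Euclidean covering geometry by the abstract constants: $N$ controls the extrinsic packing (so balls do not overlap too much), $\G$ together with the distortion $\Lambda$ controls the boundary measure of an extrinsic ball from above, and $\Lambda$ converts between $d_M$ and $d_\Sigma$ when we need the growth bound $|B^\Sigma(x,r)|_\Sigma\le\G r^n$.

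The key steps, in order, are as follows. First I would prove the technical packing-and-measure lemma alluded to in the excerpt as Lemma~\ref{CMrevisited} (the simplification of the main tool of~\cite{CM}): given the metric-measure space $(\Sigma_0,d_M,|\cdot|_\Sigma)$ with packing constant $N$ and with a doubling-type upper volume bound inherited from $\G$ and $\Lambda$, one can find, for any $m\in\N$, a family of $m$ measurable subsets $D_1,\dots,D_m$ of $\Sigma_0$ that are mutually disjoint, each of measure at least $c\,|\Sigma_0|/m$, and each contained in an extrinsic ball $B^M(z_i,r_i)$ whose radius obeys the capacity estimate that will make the Dirichlet energy small. The standard way to get this is a Vitali/decomposition argument: keep subdividing $\Sigma_0$ along extrinsic balls; the packing constant $N$ bounds the combinatorics of the subdivision, and the growth constant $\G$ (via $\Lambda$) guarantees that the pieces one produces are not too large in measure, so that after the right number of steps one lands in the window where each piece has measure comparable to $|\Sigma_0|/m$. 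Second, apply this with $m=2(k+1)$, discard the heavier half, and on the remaining $k+1$ sets build the cutoff functions $f_i$ that equal $1$ on (a slight shrinking of) $D_i$, are supported in a fixed dilate of $B^M(z_i,r_i)$, and decay linearly in $d_M$; disjointness of supports is arranged by the packing bound as in Lemma~\ref{lemma:packing}. Third, estimate: the numerator $\int_M|\nabla f_i|^2$ is at most $r_i^{-2}$ times the $M$-volume of the ball, and the denominator $\int_{\Sigma_0}f_i^2$ is at least the boundary measure of $D_i$, i.e.\ $\gtrsim|\Sigma_0|/k$; pairing these and tracking the powers of $r_i$ together with $\sum_i |B^M(z_i,\cdot)|_M\le N|M|$ yields the quotient bound $\lesssim N^3\G\Lambda^2 |M|\,|\Sigma_0|^{-(n+2)/n}k^{2/n}$. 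Finally, feed the $(k+1)$-dimensional span of the $f_i$ into~\eqref{eq:minmax} to conclude, and then Theorem~\ref{thm:upperbound} follows by summing over the $b$ boundary components and bookkeeping constants.

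The main obstacle I expect is the bookkeeping in the decomposition lemma: one must simultaneously control (i) the number of pieces produced at each stage of the subdivision (this is where $N$ enters, and getting the exponent on $N$ down to $3$ rather than something worse requires care about how many times balls are re-covered), (ii) a lower bound on the measure of each final piece — the subtle point is that naive subdivision can produce pieces that are too small, so one needs the "stop when the measure first drops below a threshold" device and then the upper growth bound to show the measure did not overshoot downward by more than a bounded factor — and (iii) the radii $r_i$, which feed quadratically into the energy and with the wrong power of $n$ into the volume, so the relation between $r_i$, the measure of $D_i$, and $|\Sigma_0|/m$ must be exactly the one dictated by the growth exponent $n$. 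Converting everything from $d_M$ to $d_\Sigma$ and back costs a factor of $\Lambda$ each way, explaining the $\Lambda^2$; the honest work is making sure no step secretly costs an unbounded constant. Once the decomposition lemma is stated cleanly (Lemma~\ref{CMrevisited}), the remainder is the routine trial-function computation sketched above.
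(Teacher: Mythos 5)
Your overall architecture matches the paper's: decompose the boundary measure of $\Sigma_0$ with respect to the extrinsic distance $d_M$, build disjointly supported plateau functions, bound each Rayleigh--Steklov quotient by $r^{-2}\volM{A^r}/\volS{A}$, and feed the span into the min-max principle; the factor $\Lambda^2$ does indeed arise from $B^M(x,r)\cap\Sigma_0\subset B^\Sigma(x,\Lambda r)$, which turns the intrinsic growth bound into $\mu(B^M(x,r))\le\G\Lambda^nr^n$ and costs $(\Lambda^n)^{2/n}$ at the end. However, you have misstated the key lemma, and the misstatement hides a genuine gap. Lemma~\ref{CMrevisited} does \emph{not} produce sets contained in balls $B^M(z_i,r_i)$ of varying radii via a subdivision or Vitali scheme; it produces, for a \emph{single} radius $r$ chosen so that every ball of radius $r$ carries at most $\mu(X)/(4N^2K)$ of the measure, a family of $K$ measurable sets of measure at least $\mu(X)/(2NK)$ that are pairwise \emph{$3r$-separated} in $d_M$. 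That separation is the crucial output: it makes the $r$-neighborhoods $A_i^r$ in $M$ pairwise disjoint, so the trial functions are disjointly supported, the Rayleigh quotient of any linear combination is controlled by the maximum of the individual quotients, and a pigeonhole on $\sum_i\volM{A_i^r}\le\vert M\vert$ selects $k+1$ sets with $\volM{A_i^r}\le\vert M\vert/k$.

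In your version the $D_i$ are merely disjoint subsets of $\Sigma_0$; disjoint sets can be at distance zero, so their enclosing balls --- let alone the dilates carrying the cutoffs --- can overlap arbitrarily. Your proposed fix, that ``disjointness of supports is arranged by the packing bound as in Lemma~\ref{lemma:packing}'', is a non sequitur: that lemma relates the packing constants of $(\Sigma,d_M)$ and $(\Sigma,d_\Sigma)$ and provides no separation whatsoever between the $D_i$. Without disjoint supports, both the min-max step (cross terms appear in the energy and in the boundary $L^2$ norm of a linear combination) and the volume pigeonhole fail. To repair the argument you must either prove the lemma in its separated form --- the proof is a greedy construction: grow a ball about some point until it first captures measure of order $\mu(X)/(NK)$, use the packing constant to check that a fixed enlargement still carries a small fraction of the total measure, remove a $3r$-neighborhood and iterate --- or otherwise build quantitative separation into your decomposition by hand. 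A smaller slip: Theorem~\ref{thm:upperbound} is not deduced by ``summing over the $b$ boundary components'' but by applying Theorem~\ref{thm:upperboundGeneral} to the component of largest volume, for which $\vert\Sigma_0\vert\ge\vert\Sigma\vert/b$.
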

Theorem~\ref{thm:upperbound} follows by taking $\Sigma_0$ to be the connected component of the boundary with the largest volume and observing that in this case
$|\Sigma_0|\geq|\Sigma|/b$.

The strategy is similar to the one used to prove~\cite[Theorem 1.1]{CGG}.
It is based on the following result, which is a simplification of~\cite[Lemma 2.1]{CEG2}.
\begin{lemma}\label{CMrevisited}
	Let $(X,d,\mu)$ be a complete, locally compact metric measure
	space, where $\mu$ is a non-atomic finite measure. Assume that for all
	$r>0$, there exists an integer $N$ such that each ball of radius
	$r$ can be covered by $N$ balls of radius $r/2$.
	Let $K>0$. If there
	exists a radius $r>0$ such that,
	for each $x \in X$
	$$
	\mu(B(x,r)) \le \frac{\mu(X)}{4N^2K},
	$$
	then, there exist $\mu$-measurable subsets $A_1,...,A_K$ of $X$
	such that, $\forall i\le K$, $\mu(A_i)\ge \frac{\mu(X)}{2NK}$
	and, for
	$i\not =j$, $d(A_i,A_j) \ge 3r$.
\end{lemma}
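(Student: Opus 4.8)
The plan is to produce the $A_i$ by an inductive ``peeling'' construction. At each stage $i$ I would remove from the current domain a set $A_i$ that is heavy (of measure $\ge\mu(X)/(2NK)$) and is contained in some metric ball $\bar B(x_i,\rho_i)$, together with a collar of width $3r$ around that ball; then every set produced at a later stage lies in the complement of that collared ball, hence automatically at distance $\ge 3r$ from $A_i$. The only real issue is the quantitative bookkeeping: one must bound the measure removed at each of the $K$ rounds by a fixed multiple of $\mu(X)/K$, so that the process survives.

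First I would record the two consequences of the covering hypothesis that get used: iterating it $m$ times shows every ball of radius $2^m\rho$ is covered by $N^m$ balls of radius $\rho$, so in particular $\mu(B(x,2r))\le N\cdot\mu(X)/(4N^2K)=\mu(X)/(4NK)$ for every $x$, and every ball of radius $4t$ is covered by $N^2$ balls of radius $t$. Now set $T:=\mu(X)/(2NK)$ and $\Omega_0:=X$. Inductively, assume $\Omega_{i-1}\subseteq X$ is open with $\mu(\Omega_{i-1})>T$. For $z\in X$ put $\rho_i(z):=\inf\{t>0:\mu(B(z,t)\cap\Omega_{i-1})\ge T\}$; this is finite since $\mu(B(z,t)\cap\Omega_{i-1})\uparrow\mu(\Omega_{i-1})>T$ as $t\to\infty$, and $\rho_i(z)\ge 2r$ since $\mu(B(z,2r)\cap\Omega_{i-1})\le\mu(B(z,2r))\le\mu(X)/(4NK)=T/2<T$. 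Let $t_i:=\inf_{z\in X}\rho_i(z)\ (\ge 2r)$, choose $x_i$ with $\rho_i(x_i)<t_i+3(t_i-r)=4t_i-3r$ (possible because $3(t_i-r)>0$), and set $A_i:=\bar B(x_i,\rho_i(x_i))\cap\Omega_{i-1}$ and $\Omega_i:=\Omega_{i-1}\setminus\bar B(x_i,\rho_i(x_i)+3r)$ (open, being $\Omega_{i-1}$ minus a closed ball).

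Three points then need checking. First, $A_i$ is heavy: $\mu(B(x_i,t)\cap\Omega_{i-1})\ge T$ for every $t>\rho_i(x_i)$ by monotonicity, so letting $t\downarrow\rho_i(x_i)$ and using finiteness of $\mu$ gives $\mu(A_i)\ge T$. Second, separation holds: $A_i\subseteq\bar B(x_i,\rho_i(x_i))$, while for $j>i$ one has $A_j\subseteq\Omega_{j-1}\subseteq\Omega_i$, which is disjoint from $\bar B(x_i,\rho_i(x_i)+3r)$, so $d(A_i,A_j)\ge 3r$ by the triangle inequality. Third, little measure is removed: since $\rho_i(x_i)+3r<4t_i$ we have $\bar B(x_i,\rho_i(x_i)+3r)\subseteq B(x_i,4t_i)$, which is covered by $N^2$ balls $B(z_1,t_i),\dots,B(z_{N^2},t_i)$, and $\mu(B(z_\ell,t_i)\cap\Omega_{i-1})\le T$ for each $\ell$ because $\rho_i(z_\ell)\ge t_i$ forces $\mu(B(z_\ell,t)\cap\Omega_{i-1})<T$ for all $t<t_i$, hence $\le T$ in the limit $t\uparrow t_i$. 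So at most $N^2T=\mu(X)/(4K)$ is removed at step $i$; consequently $\mu(\Omega_{i-1})\ge\mu(X)\bigl(1-\tfrac{i-1}{4K}\bigr)>\tfrac34\mu(X)>T$ for every $i\le K$, the inductive hypothesis is preserved, and the construction runs all $K$ steps, yielding $A_1,\dots,A_K$ with the stated properties.

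The main obstacle is exactly that last estimate. For the induction to reach $i=K$, the measure removed per round must be $\le$ a fixed multiple of $\mu(X)/K$, and the only leverage for this is the covering hypothesis applied to the removed ball. That ball has radius only $\approx 4t_i$, where $t_i$ is the smallest scale at which \emph{some} ball meets $\Omega_{i-1}$ in measure $T$; two halvings cover it by $N^2$ balls of radius $t_i$, and at scale $t_i$ every ball has $\Omega_{i-1}$-measure $\le T$ — precisely by minimality of $t_i$ — so the factor $N^2$ cancels the $1/(4N^2K)$ of the hypothesis with no room to spare. Making this work forces one to keep two radii apart: $\rho_i(x_i)$, the radius used to define $A_i$ (where the enclosed measure is $\ge T$), and $t_i$, the radius used in the covering estimate (where all ball-measures are $\le T$); and it forces the use of the crude bound $t_i\ge 2r$, which is exactly what makes the collared ball of radius $\rho_i(x_i)+3r$ still fit inside $B(x_i,4t_i)$. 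The remaining ingredients — openness of the $\Omega_i$, continuity of $\mu$ along monotone sequences of balls, finiteness of the $\rho_i$ — are routine and are the only places the completeness, local compactness and non-atomicity assumptions enter.
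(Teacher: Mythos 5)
The paper does not actually prove this lemma --- it is quoted from \cite{CEG2} (see also \cite{CGG}, Lemma 4.1), and the proof goes back to Colbois--Maerten. Your peeling construction is exactly the one used there: the radius function $\rho_i(z)$, its infimum $t_i$, a near-minimizer $x_i$, the heavy set $A_i=\bar B(x_i,\rho_i(x_i))\cap\Omega_{i-1}$, and removal of a $3r$-collar. The heaviness and separation arguments are correct. But the measure budget does not close as written, and the failure is precisely at the step you describe as closing ``with no room to spare''. You set $T=\mu(X)/(2NK)$ and bound the measure removed at step $i$ by $N^2T$, which you then equate with $\mu(X)/(4K)$. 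In fact
\[
N^2T=N^2\cdot\frac{\mu(X)}{2NK}=\frac{N\,\mu(X)}{2K},
\]
not $\mu(X)/(4K)$: you have tacitly replaced the threshold $T$ by the hypothesis bound $\mu(X)/(4N^2K)$, but that bound applies only to balls of radius $r$, whereas your $N^2$ covering balls have radius $t_i\ge 2r$ and are controlled only by the minimality of $t_i$, i.e.\ by $T$. With the correct value, $K$ rounds may remove up to $N\mu(X)/2\ge\mu(X)$ when $N\ge 2$, so the inductive hypothesis $\mu(\Omega_{i-1})>T$ cannot be maintained and the construction can terminate before producing $K$ sets.

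This is not a cosmetic slip. With the covering hypothesis in the halving form stated here, passing from the removed ball $B(x_i,4t_i)$ down to balls of radius $t_i$ costs two applications of the hypothesis, hence a factor $N^2$, and no choice of threshold $T$ simultaneously gives $\mu(A_i)\ge\mu(X)/(2NK)$, the lower bound $t_i\ge 2r$, and $K\cdot N^2T<\mu(X)$ once $N\ge 2$. The argument closes only when $N$ is the constant for covering a ball of radius $4t$ by $N$ balls of radius $t$ --- which is how the hypothesis is phrased in Colbois--Maerten and in \cite{CGG}. Then a single covering application suffices, each of the $N$ balls of radius $t_i$ meets $\Omega_{i-1}$ in measure at most $T$ by minimality of $t_i$, the removal per step is at most $NT=\mu(X)/(2K)$, and at least half the measure survives all $K$ steps. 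You should either prove the lemma under that form of the covering hypothesis (which is what the applications in this paper actually need, up to replacing $N$ by $N^2$), or carry the extra factor of $N$ into the conclusion.
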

See~\cite[Lemma 4.1]{CGG} and the following paragraph for a discussion.
\begin{proof}[Proof of Theorem \ref{thm:upperboundGeneral}]
We apply Lemma~\ref{CMrevisited} to the metric measure space $(X,d,\mu)$ where $X=M$, and  $d=d_M$ is the extrinsic distance. The measure $\mu$ is  associated to the boundary component $\Sigma_0$: for a Borelian
subset $\mathcal{O}$ of $M$, we take 
$$\mu(\mathcal O)=|\Sigma_0 \cap \mathcal O|_\Sigma.$$
In particular, $\mu (M)$
is the usual volume $\vert \Sigma_0 \vert_\Sigma$ of this component. 
In order to estimate $\sigma_k$, we first construct $(2k+2)$ trial functions, so we will take $K=2k+2$. The constant $N$ in  Lemma~\ref{CMrevisited} is $N=N_M$ for $\Sigma_0$.
Let $\Lambda$ be the distortion of $\Sigma_0$ in $M$, so that $d_\Sigma(x,y)\leq \Lambda d_M(x,y)$, for each $x,y\in\Sigma_0$.  For each $x\in \Sigma_0$, this implies
$$
\{y\in \Sigma_0: d_M(y,x)\le r\} \subset \{y\in \Sigma_0: d_\Sigma(y,x)\le \Lambda r\}.
$$
In other words, $B^M(x,r)\cap\Sigma_0\subset B^\Sigma(x,\Lambda r)$. 
Recall that $\G $ is the growth constant of $\Sigma_0$. That is, for each $x\in\Sigma_0$ and each $r>0$,
$$
\mu(B^\Sigma(x,r)) \le \G r^n.
$$
This implies that for all $r>0$,
$$
\mu(B^M(x,r)) \le \G \Lambda^nr^n.
$$
Let $K=2(k+1)$.
Any
$r<\left(\frac{  \vert \Sigma_0\vert}{4N_M^2\Lambda^n\G K}\right)^{\frac{1}{n}}$ is such that 
$$
\mu(B^M(x,r))\le \frac{\vert \Sigma_0\vert}{ 4N_M^2 K}.
$$
It follows from Lemma~\ref{CMrevisited}
that there are $2(k+1)$ measurable subsets $A_i\subset \Sigma_0$, $i=1,...,2(k+1)$, that are  $3r$-separated for $d_M$ and satisfy
$$
\mu(A_i)\ge \frac{\vert \Sigma_0 \vert}{4N_M(2k+2)} \ge \frac{\vert \Sigma_0 \vert}{16Nk}.
$$
Taking
$$
r=\left( \frac{\vert \Sigma_0\vert }{C_2K}\right)^{\frac{1}{n}},
$$
with $C_2=8\G \Lambda^nN_M^2$ is enough to insure that
$$r<\left(\frac{\vert \Sigma_0\vert}{4N^2\G \Lambda^nK}\right)^{\frac{1}{n}}.$$
\medskip
This family of subset $A_i$ allows to construct a family of trial functions as follow. 
Let $A$ be one of the subsets $A_i$ and consider 
 $A^r:=\{x\in M: d_M(A,x) \le r \}$. There exists a function $f$ supported in $A^r$ whose Rayleigh-Steklov quotient satisfies
 $$R(f) =\frac{\int_M \vert \nabla f\vert^2}{\int_{\Sigma} f^2}\le \frac{1}{r^2}\frac{\volM{A^r}}{\volS{A}}.$$

\medskip
  For this, we consider the function
$$f(p)=\left\{
\begin{array}{cll}
	1 & {\rm if } & p\in A \\
	1-\frac{d_M(p,A)}{r} & {\rm if } & p\in \left(A^r\setminus A\right) \\
	0 & {\rm if } & p\in (A^r)^{c}  \ .
\end{array}\right.$$

It takes the value $1$ on $A$ and it gradient satisfies $\vert \nabla f\vert \le \frac{1}{r}$.
As we dispose from $2(k+1)$ subsets $A_i$, with $A_i^r$ and  $A_j^r$ disjoint if $i\not =j$, $k+1$ of them, say $A_1,...,A_{k+1}$ satisfy
$$
\vert A_i^r \vert \le  \frac{\vert M\vert}{k+1} \le  \frac{\vert M\vert}{k}.
$$
So, for the function $f$ associated to $A_i$, $i=1,...,k+1$, we have:
$$R(f)\le \frac{\vert M\vert}{k} \left(\frac{C_2k}{\vert \Sigma_0\vert}\right)^{\frac{2}{n}}\frac{8N_M k}{\vert \Sigma_0\vert}$$
Using that
$$C_2=8\G \Lambda^nN_M^2,$$
we obtain
\begin{align*}
  R(f)&\le
  8N\left(8\G \Lambda^nN_M^2\right)^{\frac{2}{n}}\frac{\vert M\vert}{\vert \Sigma_0\vert^{\frac{n+2}{n}}}k^{2/n}\\
  &=
  8^{1+2/n}N_M^\frac{n+4}{n}\G ^{2/n}\Lambda^2\frac{\vert M\vert}{\vert \Sigma_0\vert^{\frac{n+2}{n}}}k^{2/n}.
\end{align*}
% Using that $|\Sigma_0|\geq|\Sigma|/b$ completes the proof of inequality~\eqref{ineq:upperbound}.
Because $8^{1+2/n}\leq 8^3=512$, this completes the proof of Theorem~\ref{thm:upperboundGeneral}.
\end{proof}

\subsection{Importance of the geometric invariants}
\label{subsection:relevance}
In this last section, we discuss the effectiveness of Theorem~\ref{thm:upperbound} and Corollary~\ref{cor:upperboundmodif}: our goal is to explain why the various geometric constants play a meaningful role. To do this, we will exhibit various families of manifolds $M_\eps$ that satisfy $\sigma_1(M_\eps)\xrightarrow{\eps\to 0}+\infty$, while all but one of the geometric constant appearing in inequality~\ref{ineq:upperbound} or inequality~\ref{ineq:upperboundmodif} are independant of the parameter $\eps$.

We start by stating and proving the Lemma which was used to obtain Corollary~\ref{cor:upperboundmodif} from Theorem~\ref{thm:upperbound}
\begin{lemma}\label{lemma:packing}
    Let $\Lambda\geq 1$ be the distortion of the boundary\, $\Sigma$ in $M$. Let $\NS$ be a packing constant for $(\Sigma,d_\Sigma)$. Then $N_M=b\NS^{\log_2(2\Lambda)}$ is a packing constant for $(\Sigma,d_M)$.
\end{lemma}
\begin{proof}
  Let $\Sigma_1,\cdots,\Sigma_b$ be the connected components of $\Sigma$.
  Let $p\in\Sigma$ and $r>0$. Select one point $y_j\in B^M(p,r)\cap\Sigma_j$ whenever this intersection is nonempty. Then
  $$B^M(p,r)\cap\Sigma \subset \cup_jB^M(y_j,2r)\cap\Sigma_j\subset\cup_jB^\Sigma(y_j,2\Lambda r).$$
  For each $j$, there exists $\NS$ balls $B^\Sigma(x_i,\Lambda r)$ with centers $x_i\in\Sigma_j$, that cover $B^\Sigma(y_j,2\Lambda r)$. Each of these is covered by $\NS$ balls of radius
  $\frac{\Lambda r}{2}$, and repeating this process $m+1\in\N$ times leads to a cover of $B^\Sigma(y_j,2\Lambda r)$ by $\NS^{m+1}$ balls of radius $\frac{\Lambda r}{2^m}$. Now, for $m> 1+\log_2(\Lambda)$ the radius of the covering balls is smaller than $r/2$. 
  It follows that $B^M(p,r)\cap\Sigma$ is covered by at most $N:=b\NS^{m+1}$ balls of radius $r/2$:
  $$B^M(p,r)\cap\Sigma\subset\cup_{i=1}^{N}B^\Sigma(x_i,r/2)\subset\cup_{i=1}^{\NS}B^M(x_i,r/2).$$
\end{proof}

\subsubsection*{a) Importance of the distortion $\Lambda$}

In~\cite{CiG} the second author and D. Cianci constructed a family of Riemannian metrics $g_\eps$ on a manifold of $M$ dimension at least 4, such $\sigma_1\xrightarrow{\eps\to 0}+\infty$, while  $|M|$ is uniformly bounded and the restriction of $g_\eps$ to the boundary $\Sigma$ does not depend on $\eps$. If follows from~\eqref{ineq:upperboundmodif}
that the distortion $\Lambda$ must also become arbitrarily large as $\eps\to 0$.

\subsubsection*{b) Importance of the volume of $M$}   Given a compact manifold $M$ of dimension at least three, with boundary $\Sigma$, we constructed in~\cite{CEG3} a family of Riemannian metrics $g_\eps$ such that $\sigma_1\xrightarrow{\eps\to 0}+\infty$, while the distortion $\Lambda$ is uniformly bounded above and  the restriction of $g_\eps$ on the boundary $\Sigma$ does not depend on $\eps$. If follows from~\eqref{ineq:upperboundmodif}
that the volume $|M|$ must also be large.

\subsubsection*{c) Importance of the number of boundary components} In~\cite{ColbGirGraphSurface}, we constructed a sequence of compact surfaces $M_\ell$ with boundary such that $\sigma_1\xrightarrow{\ell\to\infty}+\infty$, and for each $\ell$:
  $$
  |\partial M_\ell|=|M_\ell|=\Lambda=1.
  $$
  Because the boundary is one-dimensional, the packing constant $\NS$ and the growth constant $\G $ are also independant of $\ell$.
  It follows from~\eqref{ineq:upperboundmodif} that the number of connected components $b$ of the boundary must satisfy $b\to\infty$. A similar construction works in arbitrary dimension.

  \subsubsection*{d) Importance of the volume of the boundary} Let $M$ be a closed Riemannian manifold, and consider the perforated domain $M_\eps:=M\setminus B(p,\eps)$. Then $b=1$ while $\NS$, $\Lambda$, $\G $ and $|M|$ are uniformy bounded. It is proved in~\cite{Brisson2022} that $\sigma_1\xrightarrow{\eps\to 0}+\infty$. 

  \subsubsection*{e) Importance of the packing and growth constants} 
  If Theorem~\ref{thm:upperbound} was true with $\G $ and $N_M$ removed, then Corollary~\ref{coro:gnycm} would hold without these constants appearing. That is, a universal upper bound on $\lambda_1(\Sigma)|\Sigma|^{2/n}$ would be provided for each compact manifold $\Sigma$. However it was proved by the first author and J. Dodziuk~\cite{ColboisDodziuk} that any closed manifold $\Sigma$ of dimension larger than 2 admits a Riemannian metric $g$ with arbititrarily large Laplace spectral gap $\lambda_1$.

\section*{Acknowledgements}
The authors would like to thank Iosif Polterovich and Jean Lagac\'e for useful comments on a preliminary version of this paper.
  The research of BC is supported by the Swiss National Science Foundation (SNF)
  (Grant 200021\_196894,  Geometric Spectral Theory) and (Grant 200020\_212570,  Geometric Spectral Theory).
  AG is supported by NSERC Discovery grant, Isoperimetry and spectral geometry RGPIN-2022-04247 and FRQNT (Projet de recherche en \'equipe: Applications de Dirichlet-Neumann: g\'eom\'etrie spectrale et probl\`emes inverses).

\bibliographystyle{plain}
\bibliography{biblioCG}

\end{document}